\newtheorem{thm}{Theorem}[section]
\newtheorem{cor}[thm]{Corollary}
\newtheorem{lem}[thm]{Lemma}
\newtheorem{pro}[thm]{Proposition}
\newtheorem{defi}[thm]{Definition}
\newtheorem{rem}[thm]{Remark}
\newtheorem{exa}[thm]{Example}
\newenvironment{proof}{\noindent \textbf{{Proof.}} \sf}
\def\qed{\hfill $\diamond$ \bigskip}
\def\C{{\mathcal C}}
\def\D{{\mathcal D}}
\def\F{{\mathcal F}}
\def\L{{\mathcal L}}
\def\U{{\mathcal U}}
\def\lim{\mathop{\rm lim}\nolimits}
\def\Hom{\mathop{\sf Hom}\nolimits}
\def\HH{\mbox{{\rm HH}}}
\def\H{\mbox{{\rm H}}}
\def\id{\mathsf{I}}
\begin{document}

\sf

\title{Fundamental group of Schurian categories and the Hurewicz isomorphism}
\author{Claude Cibils, Mar\'\i a Julia Redondo and Andrea Solotar
\thanks{\footnotesize This work has been supported by the projects  UBACYTX212, PIP-CONICET 112- 200801-00487, PICT-2007-02182 and MATHAMSUD-NOCOMALRET.
The second and third authors are  research members of
CONICET (Argentina). The third author is a Regular Associate of ICTP Associate Scheme.}}

\date{}

\maketitle

\begin{abstract}

Let $k$ be a field. We attach a CW-complex to any Schurian $k$-category and we prove that the fundamental group of this CW-complex is isomorphic to the intrinsic fundamental group of the $k$-category. This extends previous results by J.C. Bustamante in \cite{bu1}. We also prove that the Hurewicz morphism from the vector space of abelian characters of the fundamental group to the first Hochschild-Mitchell cohomology vector space of the category is an isomorphism.

\end{abstract}

\noindent 2010 MSC: 55Q05 18D20 16E40 16W25 16W50

\section{\sf Introduction}

In this paper we consider Schurian categories, that is, small linear categories over a field $k$ such that each vector space of morphisms is either of dimension one or zero.

Recall that there is no homotopy theory available for a $k$-algebra or, more generally, for a $k$-linear category.  More precisely there is neither homotopy equivalence nor a definition of loops  as in algebraic topology taking into account the $k$-linear structure.  As an alternative we consider an intrinsic fundamental group \textit{\`{a} la} Grothendieck, that we have defined in \cite{CRS} and \cite{CRS2} using connected gradings.  In \cite{CRS2} we have computed this group for matrix algebras $M_p(k)$ where $p$ is a prime and $k$ is an algebraically closed field  of characteristic zero, obtaining that $\pi_1(M_p(k))=F_{p-1}\times C_p$ where $F_{p-1}$ is the free group with $p-1$ generators and $C_p$ is the cyclic group of order $p$, using classifications of gradings  provided in \cite{baza,bodas07,kabodas}.

The intrinsic fundamental group is defined in terms of Galois coverings provided by connected gradings.  It is  the automorphism group of the fibre functor over a fixed object.  In case a universal covering $\U$ exists for a $k$-linear category $\C$, its fundamental group $\pi_1(\C)$ is isomorphic to the automorphism group of the covering $\U$.

It is intrinsic in the sense that it does not depend on the presentation of the $k$-category by generators and relations. If a universal
covering exists, then we obtain that the fundamental groups constructed by R. Mart\'inez-Villa and J.A. de la Pe\~{n}a (see \cite{MP}, and \cite{asde,boga,ga}) depending on the choice of a presentation of the category by a quiver and relations are in fact quotients of the
intrinsic $\pi_1$ that we introduce. Note that those groups can vary according to different presentations of the same
$k$-category (see for instance \cite{asde,buca,le1}).

The definition of  $\pi_1(\C)$ is inspired in the topological case considered for instance in R.~Douady and A.~Douady's book \cite{dodo}.
They are closely related to the way in which the fundamental group is viewed in algebraic geometry
after A.~Grothendieck and C.~Chevalley.

Note that the existence of a universal covering for a $k$-linear category is equivalent to the existence of a universal grading, namely a connected grading such that any other connected grading is a quotient of it.

In this paper we will prove that a Schurian category $\C$ admits a universal covering.  In fact a universal grading is obtained through the topological fundamental group of a CW-complex $CW(\C)$ that we attach to $\C$.  We infer that $\pi_1(\C)=\pi_1(CW(\C))$.  The CW-complex we define is very close to a simplicial complex.  It is a simplicial complex when $\C$ is such that if $_y \C_x \not= 0$ then  $_x \C_y = 0$ for $x \not=y$ (where  $_y \C_x$ is the vector space of morphisms from $x$ to $y$).

J.C.~Bustamante considers in \cite{bu1} $k$-categories with a finite number of objects subject to the above conditions which he calls
"Schurian almost triangular" in order to prove a similar result through the fundamental group of a presentation as defined in \cite{boga,gr,MP}.
He uses the simplicial complex from \cite{bo,bg,madelp} whose $2$-skeleton coincides with ours in the Schurian almost triangular context.
We do not require that the category has a finite number of objects, neither that it admits an admissible presentation.
Moreover we provide an example of a Schurian category which has no admissible presentation and we compute its fundamental group.
Note also that we generalize the result by M.~Bardzell and E.~Marcos in \cite{ba}, where they prove that the fundamental group of a Schurian
basic algebra does not depend on the admissible presentation.

We thank G.~Minian for interesting discussions, and in particular for pointing out that cellular approximation enables to provide homotopy
arguments from algebraic topology using the 1-skeleton. In \cite{bu1}, J.C.~Bustamente uses the edge path group instead, which requires a
finite number of objects. In \cite{bu2} a CW-complex attached to a presentation of a finite number of objects category is considered
in order to compute the fundamental group of the presentation.

In case of a complete Schurian category $\C$, that is all the vector spaces of morphisms are one dimensional and composition
of non-zero morphisms is non-zero, the CW-complex attached to $\C$ enables to retrieve that $\pi_1(\C)=1$, see \cite[Corollary 4.6]{CRS2}.

Finally we consider the Hurewicz morphism (see \cite{asde,cire,dlpsa}) for a Schurian category $\C$.  We show that this morphism from the
vector space of abelian characters of $\pi_1(\C)$ to the first Hochschild-Mitchell cohomology vector space of $\C$ is an isomorphism.

Even though the best understood class of coverings is that of Galois coverings, general non-Galois coverings have also been considered.  For instance, in \cite{doha, japre}, almost Galois coverings and balanced coverings, respectively, have been defined. The approach is different since the focus is to get results in the representation theory of algebras, but they also use gradings, and some notions and results may have a connection with some parts of this paper.

The authors want to thank the referee for the comments and suggestions he/she made, which contributed to improvements in the presentation of the results.

\section{\sf Fundamental group}

Recall that, given a field $k$, a $k$-category is a small category $\C$ such that each morphism set ${}_y\C_x$ from an object
$x\in\C_0$ to an object $y\in\C_0$ is a $k$-vector space, the composition of
morphisms is $k$-bilinear and the $k$-multiples of the identity at each object are central
in its endomorphism ring.

A \textbf{grading} $X$ of a $k$-category $\C$ by a group $\Gamma\!_{X}$ is a direct sum decomposition
\[{}_{y} \C_x =\bigoplus_{s\in \Gamma\!_X} X^s{}_{y} \C_x\]
for each $x,y\in \C_0$, where $X^s{}_{y}\C_x$ is called the  \textbf{homogeneous component} of degree $s$ from $x$ to $y$,
such that for $s,t  \in \Gamma\!_X$
\[X^t{}_{z} \C_{y}\    X^s{}_{y} \C_x \subset  X^{ts}{}_{z} \C_x.\]
In case $f \in X^s{}_{y}\C_x$ and $f\neq 0$ we write $\deg_X f = s$ and we say that $f$ is \textbf{homogeneous} of degree $s$.

In order to define a connected grading, we consider \textbf{virtual morphisms}.  More precisely, each non-zero morphism $f$ from its source $s(f)=x$ to its target $t(f)=y$ gives rise to a  virtual  morphism $(f,-1)$ from $y$ to $x$, and we put $s(f,-1)=y$ and $t(f,-1)=x$.  We consider neither zero virtual morphisms nor composition of virtual morphisms.
A  non-zero usual morphism $f$ is identified with the virtual  morphism $(f,1)$ with the same source and target objects as $f$.

A \textbf{walk} $w$ in $\C$ is a sequence of virtual morphisms
$$(f_n, \epsilon_n), \dots, (f_1, \epsilon_1)$$
where $\epsilon_i \in \{ +1, -1 \}$, such that the target of $(f_i, \epsilon_i)$ is the source of $(f_{i+1}, \epsilon_{i+1})$. We put
$s(w) = s(f_1, \epsilon_1)$ and $t(w)=t(f_n, \epsilon_n)$.

The category $\C$ is \textbf{ connected} if for any pair of objects $(x,y)$ there exists a walk $w$ from $x$ to $y$.

A \textbf{homogeneous virtual morphism} is a virtual morphism $(f, \epsilon)$ with $f$ homogeneous. We put $\deg_X (f, 1)= \deg_X (f)$ and $\deg_X (f, -1)= \deg_X (f)^{-1}$. A \textbf{homogeneous walk} $w$ is a walk made of homogeneous virtual morphisms, and its degree is the ordered product
of the degrees of the virtual morphisms.

By definition the grading $X$ is \textbf{connected} if for any pair of objects $(x,y)$ and any group element $s\in\Gamma\!_X$ there exists a homogeneous walk $w$ from $x$ to $y$ such that $\deg_Xw=s$.  Hence if a connected grading exists the category is necessarily connected. In case the category $\C$ is  already connected,  a grading is connected if for a fixed pair of objects $(x_0,y_0)$ there exists a homogeneous walk from $x_0$ to $y_0$ of degree $s$ for any $s \in \Gamma\!_X$, see \cite{CRS}.

In general a linear category does not admit a universal covering.  However, in case a universal covering $\U$ exists, according to the theory developed in \cite{CRS,CRS2}, we have that the intrinsic fundamental group $\pi_1(\C)$ is isomorphic to the automorphism group of the universal covering.  In this paper we will not provide the general definition of the fundamental group since we will only consider $k$-categories with a universal covering.

\section{\sf CW-complex}

Let $\C$ be a connected \textbf{Schurian} $k$-category, that is a small linear category over a field $k$ such that each vector space of morphisms is either of dimension one or zero.  We choose a non-zero morphism ${}_ye_x$ in each one-dimensional space of morphisms ${}_y\C_x$, where  ${}_xe_x=  {}_x\id_x$ is the unit element of the endomorphism algebra of $x$.

Observe that  ${}_xe_y \ {}_ye_x  \not= 0$ is equivalent to ${}_ye_x \ {}_xe_y  \not= 0$, since if ${}_xe_y \ {}_ye_x=\lambda\left({}_x\id_x\right)$ with $\lambda\in k^{\ast}$, then  ${}_ye_x \ {}_xe_y \not= 0$ since otherwise ${}_ye_x \ {}_xe_y \ {}_ye_x$ is simultaneously zero and a non-zero multiple of ${}_ye_x$.

\begin{defi}
The \textbf{associated CW-complex} $CW(\C)$ is defined as follows
\begin{itemize}
\item The $0$-cells are given by the set of objects $\C_0$.
\item Each morphism  ${}_ye_x$ with $x \not= y$ gives rise to a $1$-cell still denoted ${}_ye_x$ attached to $x$ and $y$.
\item
If $x,y$ and $z$ are pairwise distinct objects such that ${}_y\C_x$, ${}_z\C_y$ and ${}_z\C_x$ are non-zero, and ${}_ze_y \ {}_ye_x \not= 0$, we add a $2$-cell attached to the $1$-cells ${}_ye_x$, ${}_ze_y$ and ${}_ze_x$.
\item
If $x$ and $y$ are distinct objects such that ${}_y\C_x$ and ${}_x\C_y$ are non-zero, and ${}_xe_y \ {}_ye_x \not= 0$ (equivalently ${}_ye_x \ {}_xe_y \not= 0$, as mentioned above), we add exactly one $2$-cell attached to the $1$-cells ${}_ye_x$ and ${}_xe_y$.
\end{itemize}
\end{defi}

\begin{rem}
Note that in case $x$ and $y$ are distinct objects such that ${}_y\C_x \not= 0 \not= {}_x\C_y$, two $1$-cells are attached to $x$ and $y$.

Observe that in case $x,y$ and $z$ are distinct objects such that ${}_ze_y \ {}_ye_x = 0$, there is no $2$-cell attached, even in case ${}_z\C_x \not = 0$.
\end{rem}

The associated CW-complex we have just defined has no $n$-cells for $n\geq 3$, it coincides with its $2$-skeleton.
We do not need to go further since the fundamental group of any CW-complex coincides with the fundamental group of its $2$-skeleton, see for instance \cite[ Chapter 2]{H}.


\begin{exa} (see \cite[Corollary 4.6]{CRS2})
Let $\D^n$ be a \textbf{complete Schurian category} with $n$-objects $1,\dots, n$: for each pair of objects
$(x,y)$, the morphism space $\D^n_x$ is one dimensional with a basis element ${}_ye_x$, where ${}_xe_x={}_x\id_x$. Composition is defined by ${}_ze_y \ {}_ye_x= {}_ze_x$ for any triple of objects. Note that the direct sum algebra of morphisms for $\D^n$ is the matrix algebra $M_n(k)$. \\
We assert that $CW(\D^n)$ is contractible, that is,  it has the homotopy type of a point. Note that $CW(\D^2)$ is a disk. For $n\geq 3$ consider the CW-subcomplex $\L^n$ consisting of all $0$-cells of $\D^n$ and a chosen $1$-cell attached to $i$ and $i+1$ for $i=1,\dots,n-1$ (there are no $2$-cells in $\L^n$). This CW-subcomplex is closed and contractible. Consequently the quotient $CW(\D^n)/\L^n$ has the same homotopy type than $CW(\D^n)$, see for instance \cite[p.11]{H}.  Moreover $CW(\D^n)/\L^n$ has only one $0$-cell. We assert that each $1$-cell not in $\L^n$ is the border of at least one disk in $CW(\D^n)/\L^n$. Indeed, in case of the $1$-cell not in $\L^n$ between $j$ and $j+1$, for $j=1,\dots,n-1$, the $2$-cell attached to the two $1$-cells between $j$ and $j+1$ becomes the required disk in the quotient.  In case the $1$-cell is between $j$ and $j+k$, for $j=1,\dots,n-2$ with $k=2,\dots,n-j$, the $2$-cells given by the triples $(j,j+1,j+2),(j,j+2,j+3), \dots, (j,j+k-1,j+k)$ provide a disk in the quotient having the original $1$-cell as border. Finally there are two $1$-cells attached to $n$ and $1$, both are not in $\L^n$ and can be identified since a $2$-cell is attached to them; they are the border of the disk obtained with the $2$-cells $(1,2,3),(1,3,4),\dots,(1,n-1,n)$.
\end{exa}

Let $w=(f_n, \epsilon_n), \cdots, (f_1, \epsilon_1)$ be a walk in $\C$ from $x$ to $y$.  The \textbf{inverse walk} $w^{-1}$ is the walk $ (f_1, -\epsilon_1), \cdots,  (f_n, -\epsilon_n)$ from $y$ to $x$.  Note that in case $w$ is a homogeneous walk for a grading $X$, then
$$\deg_X w^{-1} = (\deg_X w)^{-1}.$$

Let $\C$ be a connected  $k$-category and let $X$ be a grading of $\C$. Let $c_0$ be an object of $\C$. A set of \textbf{connector walks}
is a set of walks $u=\{{}_xu_{c_0}\}_{x\in\C_0}$ where ${}_xu_{c_0}$ goes from $c_0$ to $x$, such that $\deg_X {}_xu_{c_0} = 1$ and ${}_{c_0}u_{c_0}={}_{c_0}\id_{c_0}$. If the grading is connected a set of connector walks exist.

Let $\C$ be a $k$-category, $x$ an object in $\C$ and let $w=(f_n, \epsilon_n), \cdots, (f_1, \epsilon_1)$ be a closed walk in $\C$ from $x$ to $x$.
In $CW(\C)$ there is a loop counterpart to $w$  that we still denote $w$ and that we call the \textbf{loop described by $w$} which is defined as
follows. This loop is obtained as the continuous map from  $[0,1]$ subdivided in $n$ intervals $I_i =[\frac{i-1}{n},\frac{i}{n} ]$,
where $I_i$ corresponds to the $1$-cell defined by the non-zero morphism  $f_i$ corresponding to the virtual one $(f_i, \epsilon_i)$ and where
$w(\frac{i-1}{n}) = s(f_i, \epsilon_i)$ and $w(\frac{i}{n})=t(f_i, \epsilon_i)$.

\begin{pro} \label{zeta}
Let $\C$ be a connected Schurian $k$-category, let $X$ be a connected grading of $\C$ and let $u$ be a set of connector walks for $X$ for an object $c_0$. There exists a connected grading $Z_{X,u}$ of $\C$ by the group $\pi_1(CW(\C),c_0)$, where $c_0$ is considered as a base point of the CW-complex.
\end{pro}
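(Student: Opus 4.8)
The plan is to build $Z_{X,u}$ by transporting the combinatorial loops of $CW(\C)$ to the base point $c_0$ along the connector walks $u$. Because $\C$ is Schurian, every morphism space ${}_y\C_x$ is at most one-dimensional, so it is enough to assign a degree to each chosen morphism ${}_ye_x$ and then declare all of ${}_y\C_x$ homogeneous of that degree, the zero morphism lying in every homogeneous component; for $x=y$ we set $\deg_Z {}_x\id_x=1$. For $x\neq y$, let $p_x$ be the path in $CW(\C)$ from $c_0$ to $x$ described by the connector walk ${}_xu_{c_0}$, and let $\deg_Z {}_ye_x\in\pi_1(CW(\C),c_0)$ be the class of the loop obtained by traversing first $p_x$, then the $1$-cell ${}_ye_x$ from $x$ to $y$, then $p_y$ in reverse. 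On $\pi_1$ we use the multiplication $[\alpha][\beta]=[\beta\star\alpha]$, concatenation being taken in the order of traversal; this is the convention under which the grading axiom comes out right, and the opposite group would do equally well. This prescription already gives a direct sum decomposition ${}_y\C_x=\bigoplus_s Z^s{}_y\C_x$ with at most one nonzero summand, so what remains is compatibility with composition and connectedness of $Z_{X,u}$.

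The core step is the inclusion $Z^t {}_z\C_y\, Z^s {}_y\C_x\subseteq Z^{ts} {}_z\C_x$. As each space is concentrated in a single degree, this reduces to showing that ${}_ze_y\,{}_ye_x\neq 0$ implies $\deg_Z {}_ze_x=\deg_Z {}_ze_y\cdot\deg_Z {}_ye_x$; if the composite vanishes there is nothing to prove, and the configurations in which one of $x,y,z$ forces an identity factor are immediate from $\deg_Z {}_x\id_x=1$. The two remaining cases are exactly those for which the definition of $CW(\C)$ attaches a $2$-cell. If $x,y,z$ are pairwise distinct then ${}_z\C_x\neq 0$, since it contains the nonzero element ${}_ze_y\,{}_ye_x$, so a $2$-cell is glued along ${}_ye_x$, ${}_ze_y$, ${}_ze_x$; these three $1$-cells therefore bound a disk, so the loop traversing ${}_ye_x$, then ${}_ze_y$, then ${}_ze_x$ backwards is null-homotopic, and inserting the cancelling pieces $p^{-1}\star p$ at the intermediate vertices and conjugating by $p_x$ turns this into the desired identity $\deg_Z {}_ze_y\cdot\deg_Z {}_ye_x\cdot(\deg_Z {}_ze_x)^{-1}=1$. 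If $x=z\neq y$, then ${}_xe_y\,{}_ye_x\neq 0$ forces ${}_ye_x\,{}_xe_y\neq 0$ by the observation preceding the definition, so a single $2$-cell is glued along ${}_ye_x$ and ${}_xe_y$; these bound a disk, the loop traversing ${}_ye_x$ then ${}_xe_y$ is null-homotopic, and the same transport yields $\deg_Z {}_xe_y\cdot\deg_Z {}_ye_x=1=\deg_Z {}_x\id_x$. I expect this verification, together with the bookkeeping of the degenerate object-configurations, to be the heart of the matter: once the picture is in place it is essentially formal, the real content being that the $2$-cells of $CW(\C)$ were designed to encode precisely these relations.

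For connectedness it suffices, $\C$ being connected, to realize every $g\in\pi_1(CW(\C),c_0)$ as $\deg_Z w$ for a closed homogeneous walk $w$ at $c_0$. By cellular approximation --- equivalently, by the edge-path description of the fundamental group of a CW-complex, see \cite[Chapter 2]{H} --- the class $g$ is represented by an edge-loop based at the $0$-cell $c_0$, that is, by a finite sequence of $1$-cells ${}_{y_i}e_{x_i}$, each traversed forwards or backwards, fitting end to end. Reading a forward traversal as $({}_{y_i}e_{x_i},1)$ and a backward one as $({}_{y_i}e_{x_i},-1)$ turns this edge-loop into a closed walk $w$ at $c_0$ in $\C$, which is automatically $Z$-homogeneous. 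Expanding $\deg_Z w$ as the ordered product of the $\deg_Z(f_i,\epsilon_i)$ and substituting the definition, the connector-walk pieces $p^{-1}\star p$ telescope and $p_{c_0}$ is the constant path since ${}_{c_0}u_{c_0}={}_{c_0}\id_{c_0}$, leaving exactly the class of the original edge-loop, namely $g$; hence $\deg_Z w=g$ and $Z_{X,u}$ is connected. The grading $X$ and the requirement $\deg_X {}_xu_{c_0}=1$ play no role in this argument beyond furnishing the connector walks; they will matter only when $Z_{X,u}$ is afterwards compared with $X$.
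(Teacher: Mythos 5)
Your proof is correct and takes essentially the same route as the paper: the degree of ${}_ye_x$ is the class of the loop ${}_yu_{c_0}^{-1}\ {}_ye_x\ {}_xu_{c_0}$, the grading axiom is checked by telescoping the connectors and invoking the $2$-cells of $CW(\C)$ (both the triangle and the bigon type), and connectedness is obtained by cellular approximation turning any class into a closed walk whose degree telescopes back to that class. The paper postpones the connectedness claim to a subsequent lemma and proposition, but your argument for it coincides with the one given there, and your explicit care with the $\pi_1$-multiplication convention and the degenerate identity cases is a harmless refinement rather than a different method.
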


\begin{proof}
Let $u$ be a set of connector walks for $X$ and let ${}_ye_x$ be a non-zero morphism of ${}_y\C_x$.  We define its $Z_{X,u}$-degree as the homotopy class of the loop described by the walk $ {}_yu_{c_0}^{-1}, {}_ye_x,  {}_xu_{c_0}$ in $CW(\C)$, that is,
$$ \deg_{Z_{X,u}}  {}_ye_x = [  {}_yu_{c_0}^{-1} \  {}_ye_x \  {}_xu_{c_0}].$$
In order to prove that this defines a grading, let $x,y,z$ be objects in $\C$.  In case $  {}_ze_y \  {}_ye_x = 0$ there is nothing to prove.  In case
$  {}_ze_y \  {}_ye_x \not= 0$ we have that $$  {}_ze_y \  {}_ye_x =   {{}_z\lambda_x}\!\!\!^y \  {}_ze_x $$
with $ {{}_z\lambda_x}\!\!\!^y $  a non-zero element in $k$.  We have to show that the following equality holds:
$$(\deg_{Z_{X,u}}  {}_ze_y) (\deg_{Z_{X,u}}  {}_ye_x) =  \deg_{Z_{X,u}}   {}_ze_x.$$
The left hand side is the following homotopy class
\begin{eqnarray*}
[ {}_zu_{c_0}^{-1} \  {}_ze_y \  {}_yu_{c_0}] [ {}_yu_{c_0}^{-1} \  {}_ye_x \  {}_xu_{c_0}] & = & [ {}_zu_{c_0}^{-1} \  {}_ze_y \  {}_yu_{c_0}  {}_yu_{c_0}^{-1} \  {}_ye_x \  {}_xu_{c_0} ] \\
& = & [ {}_zu_{c_0}^{-1} \  {}_ze_y   \  {}_ye_x \  {}_xu_{c_0} ].
\end{eqnarray*}
Observe that since  $  {}_ze_y  {}_ye_x $ is a non-zero morphism in $\C$, the CW-complex has a $2$-cell attached, which means that the path described by the walk ${}_ze_y,  {}_ye_x$ is homotopic to  $  {}_ze_x$.  This observation provides the required result. \qed
\end{proof}

\begin{lem}
Let $\C$ be a connected Schurian category with a given base object $c_0$, let $X$ be a connected grading of $\C$ and let $Z_{X,u}$ be the grading considered above by the group $\pi_1(CW(\C),c_0)$.  Let $w$ be a closed walk at $c_0$ in $\C$.  Then
$$\deg_{Z_{X,u}} w = [w] \in \pi_1 (CW(\C), c_0)$$
where $[w]$ is the homotopy class of the loop described by $w$ in $CW(\C)$.
\end{lem}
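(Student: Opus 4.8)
The plan is to reduce the statement to the one-morphism computation already carried out in the proof of Proposition~\ref{zeta}, using the multiplicativity of the degree along a walk. Write $w = (f_n,\epsilon_n),\dots,(f_1,\epsilon_1)$ and let $c_0 = x_0, x_1,\dots,x_{n-1}, x_n = c_0$ be the sequence of objects it visits, so that $(f_i,\epsilon_i)$ runs from $x_{i-1}$ to $x_i$. Since $\C$ is Schurian every morphism space has dimension $\le 1$, so every non-zero morphism is homogeneous for every grading; in particular $w$ is a homogeneous walk for $Z_{X,u}$, and by the definition of the degree of a homogeneous walk one has $\deg_{Z_{X,u}} w = \deg_{Z_{X,u}}(f_n,\epsilon_n)\cdots\deg_{Z_{X,u}}(f_1,\epsilon_1)$.

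The next step is to identify each factor. Let $\ell_i$ denote the path traced in $CW(\C)$ by the single virtual morphism $(f_i,\epsilon_i)$, i.e.\ the $1$-cell attached to $f_i$ traversed from $x_{i-1}$ to $x_i$. If $\epsilon_i = 1$ then $f_i \in {}_{x_i}\C_{x_{i-1}}$ and, since the $Z_{X,u}$-degree of a non-zero morphism is unchanged under multiplication by a non-zero scalar, the defining formula of Proposition~\ref{zeta} gives directly $\deg_{Z_{X,u}}(f_i,1) = [\,{}_{x_i}u_{c_0}^{-1}\,\ell_i\,{}_{x_{i-1}}u_{c_0}\,]$. If $\epsilon_i = -1$ then $f_i \in {}_{x_{i-1}}\C_{x_i}$, so $\deg_{Z_{X,u}}(f_i,-1) = (\deg_{Z_{X,u}} f_i)^{-1} = [\,{}_{x_{i-1}}u_{c_0}^{-1}\,\overline{\ell_i}\,{}_{x_i}u_{c_0}\,]^{-1}$, where $\overline{\ell_i}$ is the same $1$-cell traversed from $x_i$ to $x_{i-1}$; inverting this homotopy class of a loop at $c_0$ yields $[\,{}_{x_i}u_{c_0}^{-1}\,\ell_i\,{}_{x_{i-1}}u_{c_0}\,]$ again. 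Hence in all cases $\deg_{Z_{X,u}}(f_i,\epsilon_i) = [\,{}_{x_i}u_{c_0}^{-1}\,\ell_i\,{}_{x_{i-1}}u_{c_0}\,]$.

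Substituting and using the concatenation convention in $\pi_1(CW(\C),c_0)$ (the same one used in the proof of Proposition~\ref{zeta}), the product $\deg_{Z_{X,u}}(f_n,\epsilon_n)\cdots\deg_{Z_{X,u}}(f_1,\epsilon_1)$ is the class of the loop obtained by concatenating the walks ${}_{x_i}u_{c_0}^{-1}\,\ell_i\,{}_{x_{i-1}}u_{c_0}$ for $i = n, n-1,\dots,1$ in that order. Each consecutive pair ${}_{x_i}u_{c_0}\,{}_{x_i}u_{c_0}^{-1}$ appearing at the joints is null-homotopic rel endpoints, so the loop is homotopic to ${}_{x_n}u_{c_0}^{-1}\,\ell_n\,\ell_{n-1}\cdots\ell_1\,{}_{x_0}u_{c_0}$; and since $x_0 = x_n = c_0$ with ${}_{c_0}u_{c_0} = {}_{c_0}\id_{c_0}$ the constant walk, this is $[\ell_n\cdots\ell_1] = [w]$, the loop described by $w$. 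This gives $\deg_{Z_{X,u}} w = [w]$.

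The only point requiring care is the orientation and order bookkeeping: one must check that the connector at $x_i$ lands on the left and the one at $x_{i-1}$ on the right in the expression for $\deg_{Z_{X,u}}(f_i,\epsilon_i)$ for both signs $\epsilon_i$ (which amounts to the elementary identity $[\alpha^{-1}e\,\beta]^{-1} = [\beta^{-1}\,\overline{e}\,\alpha]$ for loops based at $c_0$), and that the resulting order of the $\ell_i$ matches the direction in which the loop described by $w$ traverses the corresponding $1$-cells. Beyond this, no new idea is needed over Proposition~\ref{zeta}.
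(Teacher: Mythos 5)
Your proof is correct and follows essentially the same route as the paper's: you compute the $Z_{X,u}$-degree of each (possibly inverse) virtual morphism as a loop conjugated by connectors, and then observe that the connector walks cancel telescopically, with ${}_{c_0}u_{c_0}={}_{c_0}\id_{c_0}$ finishing the argument. Your write-up merely makes explicit the scalar-multiple (Schurian) and orientation bookkeeping that the paper leaves implicit.
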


\begin{proof}
Observe first that the degree of a pure virtual morphism $( {}_ye_x, -1)$ is the homotopy class
$[ {}_yu_{c_0}^{-1} \  {}_ye_x \  {}_xu_{c_0}]^{-1}= [ {}_xu_{c_0}^{-1} \  {}_ye_x^{-1} \  {}_yu_{c_0}] $.  Hence the connector walks $ {}_xu_{c_0}$ annihilate succesively in $\pi_1(CW(\C), c_0)$, enabling us to obtain the result (recall that $ {}_{c_0}u_{c_0}=  {}_{c_0}\id_{c_0}$). \qed
\end{proof}

\begin{pro}
Let $\C$ be a connected Schurian $k$-category and let $X$ be a connected grading. Then the grading $Z_{X,u}$  obtained in Proposition \ref{zeta} is connected.
\end{pro}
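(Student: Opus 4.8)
The plan is to combine the Lemma just proved with the standard fact that the fundamental group of a CW-complex is carried by edge-loops in its $1$-skeleton. First I would record two preliminary observations. Since $\C$ is Schurian, each non-zero morphism spans a one-dimensional morphism space, so it lies entirely in a single homogeneous component of $Z_{X,u}$; hence every non-zero morphism is $Z_{X,u}$-homogeneous and, consequently, \emph{every} walk in $\C$ is a $Z_{X,u}$-homogeneous walk. Moreover $\C$ is connected, so by the criterion recalled in Section~2 (see \cite{CRS}) it suffices to exhibit, for the fixed pair of objects $(c_0,c_0)$ and for each $s\in\pi_1(CW(\C),c_0)$, a closed walk $w$ at $c_0$ with $\deg_{Z_{X,u}}w=s$.

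Next, fix $s\in\pi_1(CW(\C),c_0)$ and choose a loop $\gamma$ at $c_0$ representing $s$. By cellular approximation $\gamma$ is homotopic rel $\{0,1\}$ to a loop contained in the $1$-skeleton of $CW(\C)$, which is the graph with vertex set $\C_0$ and exactly one edge ${}_ye_x$ for each one-dimensional space ${}_y\C_x$ with $x\neq y$. Any loop in a graph based at $c_0$ is homotopic, after deleting constant stretches and backtrackings, to an edge-path loop, i.e. a concatenation of oriented edges. An oriented edge of this graph is precisely a virtual morphism $({}_ye_x,\pm 1)$ of $\C$, so such an edge-path loop is exactly the loop described by some closed walk $w$ at $c_0$ in $\C$; therefore $[w]=s$ in $\pi_1(CW(\C),c_0)$.

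Finally, apply the Lemma to this $w$: since $\deg_{Z_{X,u}}w=[w]=s$, we have produced a homogeneous closed walk at $c_0$ of degree $s$ for an arbitrary $s$, which establishes that $Z_{X,u}$ is connected. The only delicate point is the reduction of an arbitrary based loop of $CW(\C)$ to the loop described by a closed walk in $\C$; this is where cellular approximation (as suggested to us by G.~Minian) and the elementary structure of the fundamental group of a graph enter, but both are entirely standard, so I expect no real difficulty.
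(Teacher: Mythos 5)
Your proposal is correct and follows essentially the same route as the paper: reduce to producing, for each class $s\in\pi_1(CW(\C),c_0)$, a closed walk at $c_0$ of $Z_{X,u}$-degree $s$, obtain such a walk by cellular approximation into the $1$-skeleton, and then invoke the preceding Lemma to identify its degree with $s$. The only cosmetic difference is that you pass from the cellular loop to an edge-path loop by the standard graph argument, whereas the paper invokes compactness (a compact set meets finitely many cells) for the same purpose; your explicit remark that every morphism, hence every walk, is automatically homogeneous in the Schurian setting is a point the paper leaves implicit.
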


\begin{proof}
Since $\C$ is connected, it is enough to prove that for any element $[l] \in \pi_1(CW(\C), c_0)$ there exists a closed walk $w$ at $c_0$ in $\C$ such that $\deg_{Z_{X,u}} w = [l].$
Recall that $[l]$ is a homotopy class, more precisely $l$ is a continuous map
$$[0,1] \to CW(\C)$$
such that $l(0)=l(1)=c_0$.  We use cellular approximation (see for instance \cite [Theorem 4.8]{H}) in order to obtain a homotopic cellular loop $l'$ such that the image of $l'$ is contained in the $1$-skeleton.  Its image is compact.  A compact set in a CW-complex meets only finitely many cells (see for instance \cite [Proposition A.1, page 520]{H}).  We infer that $l$ is homotopic to a loop $l'$ such that its image is a closed walk $w$ at $c_0$ in $\C$.  The previous Lemma asserts that the $Z_{X,u}$-degree of $w$ is precisely $[l']=[l]$. \qed
\end{proof}

\begin{defi}
Let $X$ and $Z$ be gradings of a $k$-category $\C$.  We say that $X$ is a \textbf{quotient} of $Z$ if there exists a surjective group map
$$\varphi: \Gamma\!_Z \to \Gamma\!_X$$
such that for any pair of objects $(x,y)$ we have that
$$X^s {}_y\C_x = \bigoplus_{\varphi(r)=s} Z^r  {}_y\C_x.$$
\end{defi}

\begin{thm}
Let $\C$ be a connected Schurian $k$-category and let $X$ be a connected grading of $\C$.  Let $Z_{X,u}$ be the connected grading of $\C$ by $\pi_1(CW(\C)), c_0)$ defined in the Proof of Proposition \ref{zeta}.  Then $X$ is a quotient of $Z_{X,u}$ through a unique group map $\varphi$.
\end{thm}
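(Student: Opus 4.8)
The plan is to construct the surjective group homomorphism $\varphi : \pi_1(CW(\C),c_0) \to \Gamma\!_X$ directly from the data already in hand, namely the original connected grading $X$ and the set of connector walks $u$. Given a homotopy class $[l] \in \pi_1(CW(\C),c_0)$, I would first use cellular approximation together with compactness (exactly as in the proof of the previous Proposition) to represent $[l]$ by a closed walk $w$ at $c_0$ in $\C$, and then set $\varphi([l]) = \deg_X w$. The first task is to check this is well-defined: the value $\deg_X w$ must not depend on the choice of representing walk $w$. Since any two closed walks representing $[l]$ are homotopic in $CW(\C)$, it suffices to see that the $X$-degree is invariant under the elementary homotopies that generate homotopy of loops in a $2$-complex, i.e.\ backtracking cancellation $(f,\epsilon)(f,-\epsilon)$ (which contributes $\deg_X f \cdot (\deg_X f)^{-1} = 1$) and sliding across a $2$-cell. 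The $2$-cells of $CW(\C)$ are precisely of the two kinds in the Definition: a triangle coming from ${}_ze_y\,{}_ye_x\neq 0$, across which the degree changes from $(\deg_X {}_ze_y)(\deg_X {}_ye_x)$ to $\deg_X {}_ze_x$ — and these agree because $X$ is a grading and ${}_ze_y\,{}_ye_x$ is a nonzero scalar multiple of ${}_ze_x$, hence homogeneous of the product degree; and a bigon coming from ${}_xe_y\,{}_ye_x\neq 0$, across which $(\deg_X {}_xe_y)(\deg_X {}_ye_x) = \deg_X({}_xe_y\,{}_ye_x) = \deg_X({}_x\id_x) = 1$, so $\deg_X {}_ye_x = (\deg_X {}_xe_y)^{-1}$, which is exactly the compatibility with the inverse virtual morphism. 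So $\varphi$ is well-defined, and it is visibly a homomorphism since concatenation of walks multiplies $X$-degrees in the correct order.

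Next I would verify surjectivity: this is immediate, because $X$ is a \emph{connected} grading, so for each $s\in\Gamma\!_X$ there is a homogeneous walk of degree $s$ between a fixed pair of objects; conjugating by connector walks turns it into a closed walk $w$ at $c_0$ with $\deg_X w = s$, and the loop described by $w$ in $CW(\C)$ has $\varphi$-image $s$.

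Then comes the heart of the statement, that $\varphi$ realizes $X$ as a \emph{quotient} of $Z_{X,u}$ in the sense of the Definition, i.e.\ $X^s\,{}_y\C_x = \bigoplus_{\varphi(r)=s} Z_{X,u}^r\,{}_y\C_x$. Because $\C$ is Schurian, each ${}_y\C_x$ is at most one-dimensional, so this reduces to a pointwise claim about the chosen basis morphism ${}_ye_x$: one must show $\deg_X {}_ye_x = \varphi(\deg_{Z_{X,u}} {}_ye_x)$. Now by definition $\deg_{Z_{X,u}} {}_ye_x = [{}_yu_{c_0}^{-1}\,{}_ye_x\,{}_xu_{c_0}]$, so $\varphi$ applied to it is the $X$-degree of the closed walk ${}_yu_{c_0}^{-1},\,{}_ye_x,\,{}_xu_{c_0}$, which equals $(\deg_X {}_yu_{c_0})^{-1}(\deg_X {}_ye_x)(\deg_X {}_xu_{c_0}) = 1^{-1}\cdot(\deg_X{}_ye_x)\cdot 1 = \deg_X{}_ye_x$, using that connector walks have $X$-degree $1$. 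This gives the grading-quotient identity on each homogeneous component. Finally, uniqueness of $\varphi$ follows because $\pi_1(CW(\C),c_0)$ is generated by the classes of loops described by closed walks at $c_0$ (again via cellular approximation), and on such a generator $[w]$ the quotient condition combined with the Lemma ($\deg_{Z_{X,u}} w = [w]$) forces $\varphi([w]) = \deg_X w$, so $\varphi$ is determined. I expect the only genuinely delicate point to be the well-definedness of $\varphi$ — specifically, confirming that homotopy of loops in $CW(\C)$ is generated exactly by the two elementary moves corresponding to the two types of $2$-cells plus backtracking, so that checking invariance of $\deg_X$ on those moves is sufficient; everything else is a bookkeeping consequence of $X$ being a connected grading and of the Schurian hypothesis collapsing the direct sums to single lines.
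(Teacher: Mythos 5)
Your proposal is correct and follows essentially the same route as the paper: define $\varphi([l])=\deg_X w$ via cellular approximation, check invariance of the $X$-degree across $2$-cells (the paper does this by compactness of the homotopy and induction on the finitely many $2$-cells met, rather than by invoking the generating elementary moves, and it subsumes your bigon case by allowing $z=x$ in the triangle computation), then get surjectivity from connectedness of $X$, the quotient identity from the triviality of the $X$-degrees of the connector walks together with the Schurian hypothesis, and uniqueness by reducing an arbitrary loop to products of the basic loops $[{}_yu_{c_0}^{-1}\,{}_ye_x\,{}_xu_{c_0}]$. The only cosmetic difference is that you route uniqueness through the Lemma $\deg_{Z_{X,u}}w=[w]$, which is equivalent to the paper's argument.
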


\begin{proof}
Let $[l]$ be a homotopy class in  $\pi_1(CW(\C)), c_0)$.  As in the previous proof, using cellular approximation we can assume that the image of $l$ is a closed walk $w$ at $c_0$ in $\C$.  In order to define a group morphism
$$\varphi:  \pi_1(CW(\C)), c_0) \to \Gamma\!_X$$
we put $\varphi([l])= \deg_Xw$. \\
 In order to check that $\varphi$ is well defined, we have to prove that $\deg_X w = \deg_X w'$ whenever $w$ and $w'$ are closed walks at $c_0$ providing homotopic loops in $CW(\C)$.
Assume first that $w$ and $w'$ only differ by a $2$-cell, that is, $ {}_ze_y , {}_ye_x $ is part of $w$, $ {}_ze_y \  {}_ye_x \not= 0$ and $w'$ coincide with $w$ except that $ {}_ze_y , {}_ye_x $ is replaced by $ {}_ze_x $ through the corresponding $2$-cell in $CW(\C)$.  Since $\C$ is Schurian we have that $ {}_ze_y \  {}_ye_x $ is a non-zero multiple of $ {}_ze_x $.  Now since $X$ is a grading
$$\deg_X ( {}_ze_y \ {}_ye_x ) = \deg_X  {}_ze_x $$
and $\deg_X w = \deg_X w'$.  \\
For the general case, let $h$ be a homotopy from $w$ to $w'$.  Using again the result in  \cite [ Proposition A.1, page 520]{H}, we can assume that the compact image of $h$ meets a finite number of $2$-cells.  Consequently $w$ and $w'$ only differ by a finite number of $2$-cells.  By induction we obtain $\deg_X w = \deg_X w'$.  \\
The map is clearly a group morphism.  In order to prove that $\varphi$ is surjective, let $s \in \Gamma\!_X$.  Since $X$ is connected, there exists a closed homogeneous walk $w$ at $c_0$ of $X$-degree $s$. Clearly there is a loop $l$ with image $w$,  hence $\varphi ([l]) = s$. \\
It remains to prove that the homogeneous component of a given $X$-degree $s$ is the direct sum of the corresponding $Z_{X,u}$-homogeneous components. Observe that since $\C$ is Schurian, the direct sum decomposition is reduced to only one component.  Let ${}_ye_x $ be a morphism which has $X$-degree $s$.  By definition, its $Z_{X,u}$-degree is $ [  {}_yu_{c_0}^{-1} \  {}_ye_x \  {}_xu_{c_0}]$ and we have to prove that $\varphi  [  {}_yu_{c_0}^{-1} \  {}_ye_x \  {}_xu_{c_0}] = s$, that is,
$\deg_X (   {}_yu_{c_0}^{-1} \  {}_ye_x \  {}_xu_{c_0}) =s$.  The result follows since the connectors $ {}_xu_{c_0}$ have trivial $X$-degree. \\
Concerning uniqueness, let $\varphi' :   \pi_1(CW(\C)), c_0) \to \Gamma\!_X$ be a surjective group map such that for each morphism ${}_ye_x $ we have $\varphi'(\deg_{Z_{X,u}} {}_ye_x) = \deg_X {}_ye_x $, that is,
$$ \varphi' \left( \left[  {}_yu_{c_0}^{-1} \  {}_ye_x \  {}_xu_{c_0}\right]\right) = \varphi  \left(\left[  {}_yu_{c_0}^{-1} \  {}_ye_x \  {}_xu_{c_0}\right]\right).$$
This shows that $\varphi$ and $\varphi'$ coincide on loops of this form.  Let now $l$ be an arbitrary loop.  In order to prove that $\varphi'([l]) = \varphi ([l])$, we first replace $l$ by a cellular approximation in such a way that $l$ describes a walk in $\C$.  Clearly any loop at $c_0$ in $CW(\C)$ is homotopic to a product of loops as above and their inverses.  We infer that $\varphi$ and $\varphi'$ are equal on any loop.  \qed
\end{proof}

We will prove next that $Z_{X,u}$  depends neither on the choice of the set $u$  nor on the connected grading $X$.
We will consider a slightly more general situation in order to prove these facts.

First recall that a set of connector walks depends on a given grading. In case there is no grading, a set of connector walks means a set of
connector walks for the trivial grading by the trivial group. In other words a set of connector walks for a linear category without a given
grading is just a choice of a set of walks from a given object $c_0$ to each object $x$, where the walk from $c_0$ to itself is  ${}_{c_0}\id_{c_0}$.

Let $\C$ be a connected Schurian $k$-category with a base object $c_0$ and let $u$ be a set of connector walks.  By definition the grading $Z_u$
of $\C$ with group $\pi_1(CW(\C),c_0)$ is given by $ \deg_{Z_u}  {}_ye_x =  [  {}_yu_{c_0}^{-1} \  {}_ye_x \  {}_xu_{c_0}].$
Next we will prove that given two sets of connector walks  $u,v$, the corresponding gradings $Z_u$ and $Z_v$ differ in a simple way that we will
call conjugation.

\begin{defi}
Let $X$ be a grading of a connected $k$-category $\C$.  Let $a=(a_x)_{x \in \C_0}$ be a set of group elements of $\Gamma\!_X$.  The \textbf{conjugated grading} ${{}{^a}\!X}$ has the same homogeneous components than $X$ but the degree is changed as follows:

\[ \left({}{^a}\!X\right)^s{}_y\C_x= X^{a_{_y} s a_{x}^{-1}}{}_y\C_x \]

\end{defi}

In order to consider morphisms between gradings, they must be understood in the setting of Galois coverings, see \cite{CRS2}.  More precisely any grading gives rise to a Galois covering through a smash product construction, see \cite {CM}. The Galois coverings obtained by smash products form a full subcategory of the category of Galois coverings. Moreover, both categories are equivalent.  Consequently morphisms between gradings are morphisms between the corresponding smash product Galois coverings.

Now, to each grading $X$ of a $k$-category $\C$ we associate a new $k$-category $\C \# X$ and a functor $F_X: \C \# X \to \C$ as follows.

\begin{eqnarray*}
(\C \# X)_0 &=& \C_0 \times \Gamma\!_X \\
{}_{(y,t)} ( \C \# X )_{(x,s)} &=& X^{t^{-1} s} {}_y \C_x \\
F_X(x,s) &=& x \\
F_X &:& {}_{(y,t)} ( \C \# X )_{(x,s)}  \hookrightarrow {}_y \C_x
\end{eqnarray*}
In particular $F_X$ is a Galois covering and any Galois covering is isomorphic to one of this type.  Note that $\C \# X$ is a connected category if and only if the grading $X$ is connected.

\begin{pro}
Let $\C$ be a connected $k$-category and $X$ be a connected grading of $\C$.  Let  $a=(a_x)_{x \in \C_0}$ be a set of group elements of $\Gamma\!_X$ and ${}^a\!X$ be the conjugated grading.  The Galois coverings $\C \# X$ and $\C \# {}^a\!X$ are isomorphic, more precisely there exists a functor
$H : \C \# {}^a\!X \to \C \# X$ such that $F_X H = F_ { {}^{^a}\!\!X}$.
\end{pro}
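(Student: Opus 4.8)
The plan is to write down the functor $H$ explicitly on objects and morphisms and check it is a covering isomorphism over $\C$. On objects I set $H(x,s) = (x, a_x s)$; this is a bijection on $(\C\#{}^a\!X)_0 = \C_0\times\Gamma\!_X$ with inverse $(x,s)\mapsto (x, a_x^{-1}s)$, and it clearly commutes with the projections to $\C_0$. On morphisms I need a $k$-linear map
\[
{}_{(y,t)}(\C\#{}^a\!X)_{(x,s)} \longrightarrow {}_{H(y,t)}(\C\#X)_{H(x,s)} = {}_{(y,a_y t)}(\C\#X)_{(x,a_x s)}.
\]
By definition the left-hand side is $({}^a\!X)^{t^{-1}s}{}_y\C_x = X^{a_y t^{-1} s a_x^{-1}}{}_y\C_x$, and the right-hand side is $X^{(a_y t)^{-1}(a_x s)}{}_y\C_x = X^{t^{-1}a_y^{-1}a_x s}{}_y\C_x$. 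These are in general \emph{not} the same homogeneous component, so $H$ cannot simply be the identity on the underlying spaces. The correct move is to identify the homogeneous components correctly: the point is that $({}^a\!X)^{t^{-1}s}{}_y\C_x$ and $X^{(a_yt)^{-1}(a_xs)}{}_y\C_x$ both sit inside ${}_y\C_x$, and a cleaner bookkeeping is needed. I would instead define $H$ on objects by $H(x,s) = (x, s a_x^{-1})$ (or $H(x,s)=(x, a_x^{-1}s)$, after fixing conventions). Then ${}_{H(y,t)}(\C\#X)_{H(x,s)} = X^{(ta_y^{-1})^{-1}(sa_x^{-1})}{}_y\C_x = X^{a_y t^{-1} s a_x^{-1}}{}_y\C_x$, which is exactly $({}^a\!X)^{t^{-1}s}{}_y\C_x$, the domain. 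So with this choice of $H$ on objects, I can let $H$ be the identity on each morphism space, viewing both sides as the \emph{same} subspace of ${}_y\C_x$.

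The remaining verifications are routine. First, $H$ is functorial: on identities it is clear, and for composability one checks that a composite ${}_{(z,r)}(\C\#{}^a\!X)_{(y,t)} \cdot {}_{(y,t)}(\C\#{}^a\!X)_{(x,s)}$ lands in the right graded piece and that $H$ of the composite equals the composite of the $H$'s, which holds because $H$ is the identity on the underlying morphisms of $\C$ and the grading axioms of $X$ guarantee the degrees match up. Second, $H$ is an isomorphism of categories: it is bijective on objects (the map $s\mapsto sa_x^{-1}$ is a bijection of $\Gamma\!_X$ for each fixed $x$) and, being the identity on each morphism space, it is fully faithful once the graded pieces are correctly identified as above. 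Third, $F_X H = F_{{}^a\!X}$: on objects, $F_X(H(x,s)) = F_X(x, sa_x^{-1}) = x = F_{{}^a\!X}(x,s)$; on morphisms both functors are the inclusion into ${}_y\C_x$, so they agree. This gives the asserted isomorphism of Galois coverings over $\C$.

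The main obstacle here is purely notational: getting the convention for $H$ on objects right so that the two homogeneous components literally coincide as subspaces of ${}_y\C_x$, rather than being merely abstractly isomorphic. Once the formula $H(x,s)=(x,sa_x^{-1})$ (matching the left/right placement of $a_y$ and $a_x$ in the definition of ${}^a\!X$) is pinned down, every step becomes a one-line check. I would double-check the side on which each $a$ acts against the displayed formula $({}^a\!X)^s{}_y\C_x = X^{a_y s a_x^{-1}}{}_y\C_x$ in the definition, and against the formula ${}_{(y,t)}(\C\#X)_{(x,s)} = X^{t^{-1}s}{}_y\C_x$, to make sure the exponents cancel exactly; this is the only place an error could creep in.
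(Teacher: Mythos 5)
Your proposal is correct and coincides with the paper's own argument: the paper likewise computes $({}^a\!X)^{t^{-1}s}{}_y\C_x = X^{a_yt^{-1}sa_x^{-1}}{}_y\C_x = {}_{(y,ta_y^{-1})}(\C\#X)_{(x,sa_x^{-1})}$ and defines $H(x,s)=(x,sa_x^{-1})$, the identity on morphisms. Your initial detour through the wrong convention $H(x,s)=(x,a_xs)$ is harmless since you correct it and land on exactly the same functor and verification.
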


\begin{proof}
Recall that $({}^a\!X)^s {}_y \C_x = X^{a_y s a_x^{-1}} {}_y \C_x$.  Consequently
$${}_{(y,t)} ( \C \# {}^a X )_{(x,s)} = ({}^a\!X)^{t^{-1}s} {}_y \C_x =  X^{a_y t^{-1} s a_x^{-1}} {}_y \C_x = {}_{(y,ta_y^{-1})} ( \C \# {}^a X )_{(x,sa_x^{-1})}. $$
This computation shows that defining $H$ on objects by $H(x,s)= (x, s a_x^{-1})$ and by the identity on morphisms provides the required isomorphism. \qed
\end{proof}

\begin{pro}
Let $\C$ be a connected Schurian $k$-category, $c_0$ a base object and $X,Y$ two connected gradings of $\C$.  Let $Z_{X,u}$ and $Z_{Y,v}$ be the connected gradings by the group $\pi_1(CW(\C), c_0)$, associated to the sets $u$ and $v$ of homogeneous connector walks for $X$ and $Y$ respectively, given by the choices $ {}_xu_{c_0}$ and $ {}_xv_{c_0}$ for any $x \in \C_0$.  Then $Z_{Y,v}$ and $Z_{X,u}$ are conjugated through the set of group elements $a_x =  {}_xu_{c_0}^{-1}  {}_xv_{c_0}$.
\end{pro}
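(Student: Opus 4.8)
The plan is to begin with the key observation that the formula $\deg_{Z_{X,u}}{}_ye_x=[{}_yu_{c_0}^{-1}\,{}_ye_x\,{}_xu_{c_0}]$ from the proof of Proposition~\ref{zeta} does not actually involve the grading $X$: it only records the homotopy class of a loop built out of the connector walks. Thus $Z_{X,u}=Z_u$ and $Z_{Y,v}=Z_v$ in the notation introduced just above, so comparing $Z_{X,u}$ with $Z_{Y,v}$ really means comparing $Z_u$ with $Z_v$. Before anything else I would check that the family $a=(a_x)_{x\in\C_0}$ is a legitimate input for the conjugated-grading construction, i.e.\ that each $a_x=[{}_xu_{c_0}^{-1}\,{}_xv_{c_0}]$ lies in $\pi_1(CW(\C),c_0)$: since ${}_xv_{c_0}$ runs from $c_0$ to $x$ and ${}_xu_{c_0}^{-1}$ from $x$ to $c_0$, their juxtaposition is a closed walk at $c_0$; note also that $a_{c_0}=1$ because both connectors at $c_0$ are ${}_{c_0}\id_{c_0}$.

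Next, because $\C$ is Schurian each ${}_y\C_x$ has dimension $0$ or $1$, so the homogeneous decompositions of $Z_u$, $Z_v$ and ${}^a(Z_u)$ are all trivial and it is enough to check that these gradings assign the same degrees to every nonzero ${}_ye_x$. By the definition of the conjugated grading, $\deg_{{}^a(Z_u)}{}_ye_x=a_y^{-1}(\deg_{Z_u}{}_ye_x)\,a_x$, so what remains is to verify the identity $\deg_{Z_v}{}_ye_x=a_y^{-1}(\deg_{Z_u}{}_ye_x)\,a_x$ in $\pi_1(CW(\C),c_0)$. Substituting the definitions and using that the product of homotopy classes is concatenation of loops read in the same (right-to-left) order as composition of walks, as in the proof of Proposition~\ref{zeta}, the right-hand side equals $[{}_yv_{c_0}^{-1}\,{}_yu_{c_0}\,{}_yu_{c_0}^{-1}\,{}_ye_x\,{}_xu_{c_0}\,{}_xu_{c_0}^{-1}\,{}_xv_{c_0}]$; the round-trips ${}_xu_{c_0}\,{}_xu_{c_0}^{-1}$ and ${}_yu_{c_0}\,{}_yu_{c_0}^{-1}$ are null-homotopic in $CW(\C)$ — exactly the cancellation already used in Proposition~\ref{zeta} — and what is left is $[{}_yv_{c_0}^{-1}\,{}_ye_x\,{}_xv_{c_0}]=\deg_{Z_v}{}_ye_x$. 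This yields $Z_{Y,v}={}^a(Z_{X,u})$, which is the assertion.

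I do not anticipate a genuine obstacle: once the reduction $Z_{X,u}=Z_u$ is made the argument is pure bookkeeping. The only two points that need a little care are the one flagged above — that the conjugating family consists of honest elements of the fundamental group, so that the earlier Definition applies — and keeping the conventions for composing walks and for multiplying homotopy classes consistent, so that the two pairs of connector round-trips really cancel. It may also be worth recording that being "conjugated through $a$" is symmetric: from $Z_{Y,v}={}^a(Z_{X,u})$ one gets $Z_{X,u}={}^{a^{-1}}(Z_{Y,v})$ with $a^{-1}=(a_x^{-1})_{x\in\C_0}$ and $a_x^{-1}=[{}_xv_{c_0}^{-1}\,{}_xu_{c_0}]$, so the roles of $u$ and $v$ can be swapped freely.
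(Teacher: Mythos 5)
Your proposal is correct and follows essentially the same route as the paper: both reduce to the single degree identity $\deg_{Z_{Y,v}}{}_ye_x=a_y^{-1}(\deg_{Z_{X,u}}{}_ye_x)\,a_x$ and verify it by the same cancellation of the round-trips ${}_yu_{c_0}\,{}_yu_{c_0}^{-1}$ and ${}_xu_{c_0}\,{}_xu_{c_0}^{-1}$ inside the homotopy class $[{}_yv_{c_0}^{-1}\,{}_yu_{c_0}\,{}_yu_{c_0}^{-1}\,{}_ye_x\,{}_xu_{c_0}\,{}_xu_{c_0}^{-1}\,{}_xv_{c_0}]$. The additional checks you record (that each $a_x$ is a genuine element of $\pi_1(CW(\C),c_0)$, that $Z_{X,u}$ depends only on $u$, and the symmetry remark) are harmless refinements of the same argument.
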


\begin{proof}
Recall that
$\deg_{Z_{X,u}}  {}_ye_x = [  {}_yu_{c_0}^{-1}\  {}_ye_x \ {}_xu_{c_0}]$, then by definition
\begin{eqnarray*}
\deg_{{}^{^a}\!\!Z_{X,u}}  {}_ye_x & = & a_y^{-1} (\deg_{Z_{X,u}}  {}_ye_x ) \ a_x \\
& = & [{}_yv_{c_0}^{-1} \ {}_yu_{c_0} \ {}_yu_{c_0}^{-1} \  {}_ye_x  \ {}_xu_{c_0} \  {}_xu_{c_0}^{-1} \ {}_xv_{c_0} ] \\
&= &\deg_{Z_{Y,v}}  {}_ye_x.
\end{eqnarray*} \qed
\end{proof}

\begin{rem}
Since all the gradings $Z_{X,u}$ are isomorphic, we can choose the trivial grading by the trivial group.  However we still need to choose connector walks. Moreover we have shown that each connected grading is a unique quotient of the grading by the group $\pi_1(CW(\C), c_0)$.
\end{rem}

\begin{cor}
Let $\C$ be a connected Schurian $k$-category, and let $c_0$ be a base object.  Then
$$\pi_1(\C, c_0) = \pi_1(CW(\C),c_0).$$
\end{cor}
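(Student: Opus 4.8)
The plan is to deduce the identification $\pi_1(\C,c_0)=\pi_1(CW(\C),c_0)$ from the universal property of the grading $Z:=Z_{X,u}$ that has just been established. Recall from Section~2 that $\pi_1(\C,c_0)$ is, by definition, the automorphism group of the universal covering of $\C$, and that such a covering exists precisely when there is a \emph{universal grading}, i.e.\ a connected grading of which every connected grading is a quotient. So the first step is to observe that the preceding Theorem says exactly that: for every connected grading $X$ of $\C$, the grading $Z_{X,u}$ by $\pi_1(CW(\C),c_0)$ admits $X$ as a quotient through a unique group map $\varphi$. Combined with the Remark — that all the gradings $Z_{X,u}$, for varying $X$ and $u$, are mutually conjugate, hence isomorphic (via the Proposition on conjugated gradings and the equivalence with smash–product Galois coverings) — we get a \emph{single} connected grading $Z$ by $\pi_1(CW(\C),c_0)$, well defined up to isomorphism, which is a quotient of itself in the only sensible way (through the identity) and of which every connected grading of $\C$ is a quotient. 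That is precisely a universal grading.

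Next I would translate this back into the language of coverings. By the construction $\C\#X$ with its functor $F_X:\C\#X\to\C$, every connected grading corresponds to a connected Galois covering of $\C$, and quotients of gradings correspond (contravariantly) to morphisms of Galois coverings over $\C$; this is the dictionary recalled before the Proposition on $\C\#{}^a\!X$. Hence the universal grading $Z$ yields a Galois covering $\U:=\C\#Z\to\C$ through which every connected Galois covering factors, i.e.\ a universal covering of $\C$ in the sense of \cite{CRS,CRS2}. Then, by the general theory quoted in Section~2, $\pi_1(\C,c_0)$ is the automorphism group of this universal covering, and for a Galois covering of the form $\C\#Z$ the group of deck transformations is canonically the grading group $\Gamma_Z=\pi_1(CW(\C),c_0)$. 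Chaining these identifications gives $\pi_1(\C,c_0)\cong\pi_1(CW(\C),c_0)$.

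The only real point requiring care — the ``main obstacle'' — is checking that the grading $Z$ deserves to be called \emph{universal}, i.e.\ that the word ``quotient'' in the Theorem is the correct notion to make $\C\#Z$ factor all connected coverings, and that the factoring map is itself a (Galois) covering morphism so that the automorphism group computation applies. Concretely one must verify: (i) the surjectivity of $\varphi:\pi_1(CW(\C),c_0)\to\Gamma_X$ from the Theorem translates into the covering $\C\#Z\to\C$ dominating $\C\#X\to\C$ via an intermediate covering $\C\#Z\to\C\#X$ which is again Galois (with group $\ker\varphi$); and (ii) uniqueness of $\varphi$, also from the Theorem, gives uniqueness of this factorisation, which is exactly the universal property in Grothendieck's sense. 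Both are in fact immediate from the smash–product formalism — $\ker\varphi$ acts freely, the quotient grading is $X$ — so the proof is short; it is essentially an invocation of the Theorem together with the covering/grading dictionary, with no new computation.
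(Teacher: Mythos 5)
Your proposal is correct and follows essentially the same route as the paper: the preceding Theorem (unique quotient) and Propositions (conjugation independence) show that the grading by $\pi_1(CW(\C),c_0)$ is universal, so the smash product $\C\# Z$ is a universal Galois covering whose automorphism group is the grading group, and by the theory of \cite{CRS,CRS2} this automorphism group is $\pi_1(\C,c_0)$. The extra verifications you flag (the quotient/covering dictionary and the deck group of $\C\# Z$ being $\Gamma_Z$) are exactly the points the paper delegates to \cite{CRS,CRS2} rather than reproving.
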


\begin{proof}
From \cite {CRS}, we know that in case a universal covering exists, the fundamental group of the category is its group of automorphisms.  The results we have proven show that the grading by the fundamental group of $CW(\C)$ is a universal grading, consequently the smash product Galois covering is a universal Galois covering with automorphism group $ \pi_1(CW(\C),c_0)$.\qed
\end{proof}

Next we compute the intrinsic fundamental group of a $k$-category with an infinite number of objects and without admissible presentation.

\begin{exa}

Let $\C$ be the $k$-category given by the quiver:

\[ \xymatrix{
\vdots   \ar[d] &  & \vdots  \\
a_1 \ar[rr]^{\alpha_1} \ar[d]_{\beta_1} & & b_1 \ar[u] \\
a_0 \ar[rr]^{\alpha_0} \ar[d]_{\beta_0} & & b_0 \ar[u]_{\gamma\!_0} \\
a_{-1} \ar[rr]^{\alpha_{-1}} \ar[d] & & b_{-1} \ar[u]_{\gamma\!_{-1}} \\
\vdots & & \vdots \ar[u]
} \]
with the relations $\gamma\!_i \alpha_i \beta_{i+1} = \alpha_{i+1}$ for all $i \not=0$ and $\gamma\!_0 \alpha_0 \beta_{1} = 0$. \\
In  $CW(\C)$ there is a $2$-cell attached to each square except the $0$-one. Consequently $\pi_1(\C)=\mathbb Z$.

\end{exa}

\section{\sf Hurewicz isomorphism}

Let $\C$ be a $k$-category.  A\textbf{ $k$-derivation} $d$ with coefficients in $\C$ is a set of linear morphisms ${}_yd_x :  {}_y \C_x \to  {}_y \C_x$ for each pair $(x,y)$ of objects, verifying
$${}_zd_x (gf) = {}_zd_y (g) f+g {}_yd_x (f) $$
for any $f \in  {}_y \C_x$ and $g \in  {}_z\C_y$. \\
Let $a=(a_x)_{x\in \C_0}$ be a family of endomorphisms of each object $x \in \C_0$, namely $a_x \in {}_x\C_x$.  The \textbf{inner derivation} $d_a$ associated to $a$ is defined by
$${}_y(d_a)_x (f) = a_y f - f a_x.$$
The \textbf{first Hochschild-Mitchell cohomology} $\HH^1(\C)$ is the quotient of the vector space of derivations by the subspace of inner ones (see \cite{Mi} for the general definition).

\begin{rem}
In fact $\HH^1(\C)$ has a Lie algebra structure, where the bracket of derivations is given by
$${}_y [d,d']_x = {}_yd_x \ {}_yd'_x - {}_yd'_x \ {}_yd_x.$$
\end{rem}

\begin{defi}
Let $X$ be a grading of a $k$-category $\C$.  The \textbf{Hurewicz morphism}
$$h: \Hom ( \Gamma\!_X, k^+) \to \HH^1(\C)$$
is defined as follows.  Let $\chi : \Gamma\!_X \to k^+$ be an abelian character and let $f$ be a homogeneous morphism in ${}_y\C_x$.  Then
$${}_y h(\chi)_x (f)= \chi (\deg_X f) f.$$
An arbitrary morphism is decomposed as a sum of its homogeneous components in order to extend linearly the definition of ${}_y h(\chi)_x$.
\end{defi}

\begin{rem}
The set $h(\chi)$ is a derivation.  This can be verified in a simple way relying on the fact that $X$ is a grading.  Derivations of this type are called "Eulerian derivations", see for instance \cite {FGM, FGGM}.
\end{rem}

The following result is immediate.

\begin{lem}
The image of the Hurewicz morphism is an abelian Lie subalgebra of $\HH^1(\C)$.
\end{lem}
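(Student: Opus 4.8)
The plan is to show two things: first that the image of the Hurewicz morphism is closed under the Lie bracket of $\HH^1(\C)$, and second that the bracket of any two elements of the image vanishes (which in particular gives closure for free). So in fact it suffices to establish the second statement alone. Let $\chi, \chi' : \Gamma\!_X \to k^+$ be two abelian characters. I would compute the bracket $[h(\chi), h(\chi')]$ directly on a homogeneous morphism $f \in {}_y\C_x$ using the formula from the preceding Remark, namely ${}_y[d,d']_x = {}_yd_x\, {}_yd'_x - {}_yd'_x\, {}_yd_x$.

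The computation is short: for homogeneous $f$ with $\deg_X f = s$,
\begin{eqnarray*}
{}_y[h(\chi), h(\chi')]_x (f) &=& {}_yh(\chi)_x\bigl({}_yh(\chi')_x(f)\bigr) - {}_yh(\chi')_x\bigl({}_yh(\chi)_x(f)\bigr) \\
&=& {}_yh(\chi)_x\bigl(\chi'(s)\, f\bigr) - {}_yh(\chi')_x\bigl(\chi(s)\, f\bigr) \\
&=& \chi'(s)\chi(s)\, f - \chi(s)\chi'(s)\, f \\
&=& 0,
\end{eqnarray*}
since $\chi(s), \chi'(s)$ are scalars in the commutative field $k$ and $h(\chi)_x$ is $k$-linear. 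An arbitrary morphism decomposes as a sum of homogeneous components, and both $h(\chi)$ and $h(\chi')$ preserve each homogeneous component (they act by a scalar on each), so the identity ${}_y[h(\chi),h(\chi')]_x = 0$ extends linearly to all of ${}_y\C_x$. Hence $[h(\chi), h(\chi')] = 0$ in $\HH^1(\C)$.

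It remains to note that the image of $h$ is a vector subspace — this is clear since $h$ is linear in $\chi$ by construction (the definition extends linearly, and $h(\lambda\chi + \mu\chi')$ acts on a homogeneous $f$ of degree $s$ by the scalar $(\lambda\chi + \mu\chi')(s) = \lambda\chi(s) + \mu\chi'(s)$). Combined with the vanishing of all brackets, the image is an abelian Lie subalgebra of $\HH^1(\C)$. I do not anticipate any real obstacle here: the only point worth stating carefully is that the bracket is computed at the level of derivations and the resulting derivation is literally zero (not merely inner), so the conclusion holds on the nose in $\HH^1(\C)$.
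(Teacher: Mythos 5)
Your proof is correct and coincides with the paper's intended argument: the paper offers no written proof, simply declaring the result immediate, and the immediate verification is precisely yours — on each homogeneous morphism the two Eulerian derivations $h(\chi)$, $h(\chi')$ act by scalars $\chi(\deg_X f)$, $\chi'(\deg_X f)$, so their bracket vanishes identically (not merely up to inner derivations), and linearity of $h$ in $\chi$ makes the image a subspace, hence an abelian Lie subalgebra of $\HH^1(\C)$.
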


We recall that, under some assumptions, the Hurewicz morphism is injective.

\begin{pro}
Let $\C$ be a $k$-category and assume the endomorphism algebra ${}_x \C_x$ of each object $x$ in $\C_0$ is equal to $k$.  Let $X$ be a connected grading of $\C$.  Then the Hurewicz morphism is injective.
\end{pro}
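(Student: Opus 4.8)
The goal is to show that if $\chi: \Gamma\!_X \to k^+$ is an abelian character with $h(\chi) = 0$ in $\HH^1(\C)$, then $\chi = 0$. Saying $h(\chi)$ is zero in $\HH^1(\C)$ means that the Eulerian derivation $h(\chi)$ is inner, i.e. there is a family $a = (a_x)_{x\in\C_0}$ with $a_x \in {}_x\C_x = k$ such that for every homogeneous $f \in {}_y\C_x$ we have $\chi(\deg_X f)\, f = a_y f - f a_x$. Since each $a_x$ is a scalar, this reads $\chi(\deg_X f)\, f = (a_y - a_x) f$, hence $\chi(\deg_X f) = a_y - a_x$ whenever ${}_y\C_x$ contains a nonzero homogeneous morphism $f$ of that degree.

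The first step is to extend this identity from morphisms to homogeneous walks. If $w = (f_n,\epsilon_n),\dots,(f_1,\epsilon_1)$ is a homogeneous walk from $x_0$ to $x_n$ with the objects traversed being $x_0, x_1, \dots, x_n$, then $\deg_X w$ is the ordered product of the $\deg_X(f_i,\epsilon_i)$, and since $\chi$ is an abelian character (additive on $\Gamma\!_X$), we get $\chi(\deg_X w) = \sum_{i=1}^n \chi(\deg_X(f_i,\epsilon_i))$. For a virtual morphism $(f_i,1)$ going from $x_{i-1}$ to $x_i$ this contributes $\chi(\deg_X f_i) = a_{x_i} - a_{x_{i-1}}$; for $(f_i,-1)$, whose underlying morphism $f_i$ goes from $x_i$ to $x_{i-1}$, we have $\deg_X(f_i,-1) = (\deg_X f_i)^{-1}$, so the contribution is $-\chi(\deg_X f_i) = -(a_{x_{i-1}} - a_{x_i}) = a_{x_i} - a_{x_{i-1}}$. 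In either case the contribution of the $i$-th step is $a_{x_i} - a_{x_{i-1}}$, so the sum telescopes and $\chi(\deg_X w) = a_{x_n} - a_{x_0}$, depending only on the endpoints.

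The second step uses connectedness of the grading. Fix any $s \in \Gamma\!_X$; since $X$ is a connected grading, there is a homogeneous \emph{closed} walk $w$ at a fixed object $c_0$ with $\deg_X w = s$. By the telescoping identity just established, $\chi(s) = \chi(\deg_X w) = a_{c_0} - a_{c_0} = 0$. As $s$ was arbitrary, $\chi = 0$, proving injectivity.

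\textbf{Main obstacle.} There is no serious obstacle here; the argument is essentially a telescoping computation. The one point requiring a little care is the sign bookkeeping for inverse (virtual) morphisms in the first step — one must check that a $(-1)$-step between objects $x_{i-1}$ and $x_i$ contributes $a_{x_i} - a_{x_{i-1}}$ exactly as a $(+1)$-step does, which is where the definition $\deg_X(f,-1) = (\deg_X f)^{-1}$ together with the additivity of $\chi$ does the work. The hypothesis ${}_x\C_x = k$ is used precisely to know that an inner derivation is given by scalars $a_x$, so that $a_y f - f a_x = (a_y - a_x) f$.
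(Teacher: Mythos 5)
Your proof is correct and follows essentially the same route as the paper: from innerness of $h(\chi)$ you get $\chi(\deg_X f)=a_{t(f)}-a_{s(f)}$ for nonzero homogeneous $f$, the telescoping extension to homogeneous walks (with the same sign check for virtual morphisms), and then connectedness of the grading applied to closed homogeneous walks at $c_0$ forces $\chi=0$. No gaps.
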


\begin{proof}
If $h(\chi)$ is an inner derivation,
$${}_{t(f)} h(\chi)_{s(f)} (f) = \chi (\deg_X f) f = a_{t(f)} f - f a_{s(f)} $$
for any homogeneous non-zero morphism $f$, where $(a_x)_{x \in \C_0}$ is a set of endomorphisms which are elements of $k$ by hypothesis.  Then $\chi(\deg_X f) = a_{t(f)} - a_{s(f)}$. \\
Now we assert that the same equality holds for any homogeneous walk $w$, that is,
$$\chi(\deg_X w) = a_{t(w)} - a_{s(w)}.$$
For instance let $w= (g,-1), (f,1)$ be a homogeneous walk where $f \in {}_y \C_x$ and $g \in {}_y\C_z$.  Then

\begin{eqnarray*}
\chi (\deg_X w) & = &  \chi ( (\deg_X g )^{-1}(\deg_Xf)) = - \chi (\deg_X g) + \chi (\deg_X f) \\
              & = & a_{s(g)} - a_{t(g)} + a_{t(f)} - a_{s(f)} \\
              & = &  a_z - a_y + a_y - a_x = a_z - a_x = a_{t(w)} - a_{s(w)}.
\end{eqnarray*}
Let $c_0$ be any fixed object of $\C$.  Since $X$ is a connected grading, for any group element $s \in \Gamma\!_X$ there exists a homogeneous walk $w$, closed at $c_0$, such that $\deg_X w = s$.  Consequently
$$\chi(s) w = (a_{c_0} - a_{c_0}) w =0$$
hence $\chi(s) = 0$ for any $s \in \Gamma\!_X$. \qed
\end{proof}

\begin{thm}
Let $\C$ be a connected Schurian $k$-category and let $U$ be its universal grading by the fundamental group $\pi_1(CW(\C),c_0)$.  The corresponding Hurewicz morphism is an isomorphism.
\end{thm}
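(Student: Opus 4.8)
The plan is to establish the isomorphism by combining the already-proven injectivity with a surjectivity argument that uses the CW-complex structure. Since $\C$ is Schurian, the endomorphism algebra ${}_x\C_x$ of each object is one-dimensional, hence equal to $k\cdot{}_x\id_x$; so the hypothesis of the preceding Proposition is satisfied, and the Hurewicz morphism $h:\Hom(\pi_1(CW(\C),c_0),k^+)\to\HH^1(\C)$ is injective. It therefore remains to prove surjectivity: given an arbitrary $k$-derivation $d$ with coefficients in $\C$, I must produce an abelian character $\chi:\pi_1(CW(\C),c_0)\to k^+$ whose associated Eulerian derivation $h(\chi)$ is cohomologous to $d$.

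First I would normalize $d$ modulo inner derivations so that it becomes ``diagonalizable'' with respect to the chosen basis $\{{}_ye_x\}$. For each $1$-cell ${}_ye_x$ (with $x\neq y$), since ${}_y\C_x$ is one-dimensional, ${}_yd_x({}_ye_x)=\mu_{y,x}\,{}_ye_x$ for a unique scalar $\mu_{y,x}\in k$; and on endomorphisms $d$ vanishes because ${}_x\C_x=k$ and any $k$-derivation kills $k$. The derivation law ${}_zd_x(gf)={}_zd_y(g)f+g\,{}_yd_x(f)$ applied to a nonzero composite ${}_ze_y\,{}_ye_x=\lambda\,{}_ze_x$ gives the cocycle-type relation $\mu_{z,x}=\mu_{z,y}+\mu_{y,x}$ whenever this $2$-cell is present in $CW(\C)$, and applied to ${}_xe_y\,{}_ye_x\in{}_x\C_x=k$ gives $\mu_{x,y}+\mu_{y,x}=0$ whenever the corresponding $2$-cell of the second type is present. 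Thus the assignment ${}_ye_x\mapsto\mu_{y,x}$ defines a simplicial (cellular) $1$-cochain on $CW(\C)$ with values in $k$ which is closed on every $2$-cell of $CW(\C)$; equivalently it is a cellular $1$-cocycle of the cochain complex of $CW(\C)$.

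Next I would use this cocycle to define the character. A cellular $1$-cocycle $\mu$ on a connected CW-complex, evaluated along loops, gives a well-defined homomorphism $\pi_1(CW(\C),c_0)\to k^+$: the value on a loop (represented, after cellular approximation, by a closed walk $w=({}_{y_n}e_{y_{n-1}},\epsilon_n),\dots,({}_{y_1}e_{y_0},\epsilon_1)$ at $c_0$) is $\chi([w])=\sum_i\epsilon_i\mu_{y_i,y_{i-1}}$, and closedness on $2$-cells together with the finiteness-of-cells-met-by-a-compact-set argument (as in the Proof of the Theorem on quotients earlier in the paper) shows this is invariant under homotopy; it is a homomorphism into the abelian group $k^+$ because concatenation of walks adds the sums. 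So $\chi\in\Hom(\pi_1(CW(\C),c_0),k^+)$. Finally I must check $h(\chi)=d$ modulo inner derivations. By the lemma preceding the Theorem (the identity $\chi(\deg_U w)=[w]$ for the universal grading $U=Z_{X,u}$ after one fixes connector walks), the $U$-degree of the $1$-cell ${}_ye_x$ is $[{}_yu_{c_0}^{-1}\,{}_ye_x\,{}_xu_{c_0}]$, and evaluating the cocycle $\mu$ along that loop gives $\chi(\deg_U{}_ye_x)=\mu_{y,x}+(\text{telescoping connector contributions})$. Setting $a_x:=\chi(\deg_U{}_xu_{c_0})\in k$ (the connector-walk contribution at $x$, with $a_{c_0}=0$), the telescoping identity becomes exactly $\chi(\deg_U{}_ye_x)=\mu_{y,x}+a_y-a_x$; hence $h(\chi)({}_ye_x)-d({}_ye_x)=(a_y-a_x){}_ye_x={}_y(d_a)_x({}_ye_x)$, so $h(\chi)-d=d_a$ is inner, which is surjectivity.

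The main obstacle is the bookkeeping in the last step: one must be careful that the scalars $\mu_{y,x}$ really depend only on the pair of objects (equivalently on the chosen basis elements) and that the character $\chi$ reconstructed from $\mu$ reproduces, after subtracting the connector-walk telescoping terms $a_x$, precisely the diagonal entries $\mu_{y,x}$ rather than these entries shifted by some global discrepancy. This is precisely where the condition ${}_{c_0}u_{c_0}={}_{c_0}\id_{c_0}$ (forcing $a_{c_0}=0$) and the consistency of $\mu$ on the $2$-cells of the second type (the relation $\mu_{x,y}=-\mu_{y,x}$, matching $\deg_U({}_xe_y)=(\deg_U{}_ye_x)^{-1}$ and $\chi$ being a homomorphism into $k^+$) are used, and it parallels the uniqueness argument in the Proof of the quotient Theorem. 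The rest of the verifications — that $h(\chi)$ is a derivation, that $d$ vanishes on endomorphisms, that cellular approximation applies — are either routine or already supplied by results quoted in the excerpt.
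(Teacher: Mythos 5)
Your proposal is correct and follows essentially the same route as the paper: injectivity is quoted from the preceding proposition, and surjectivity is obtained by reading off the scalars $\mu_{y,x}$ from $d$, using the derivation rule on nonzero composites (and on ${}_xe_y\,{}_ye_x\in{}_x\C_x$) to get compatibility with both types of $2$-cells, and then producing a character of $\pi_1(CW(\C),c_0)$ via cellular approximation and the finite-number-of-$2$-cells argument — the only (harmless) difference being that your $\chi$, defined by evaluating the cocycle along loops, recovers $d$ only up to the inner derivation coming from the connector-walk contributions, whereas the paper normalizes $\chi$ on the loops ${}_yu_{c_0}^{-1}\,{}_ye_x\,{}_xu_{c_0}$ so as to hit $d$ on the nose. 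One notational slip to correct: $a_x$ must be the signed sum of $\mu$ along the walk ${}_xu_{c_0}$ (a cochain evaluation), not $\chi(\deg_U{}_xu_{c_0})$, since $\deg_U{}_xu_{c_0}=1$ in $\pi_1(CW(\C),c_0)$ makes the latter identically zero; with that reading your telescoping identity and the inner-derivation correction are exactly right.
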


\begin{proof}
The previous result insures that $h$ is injective.  In order to prove that $h$ is surjective, let $d$ be a derivation.  We choose a non-zero morphism ${}_ye_x$ in each $1$-dimensional space of morphisms ${}_y\C_x$, with ${}_xe_x = {}_x\id_x$. Let $c_0$ be a fixed object in $\C$. To describe the universal grading, recall that we choose a set of  connector walks, hence
$$\deg_U {}_ye_x = [ {}_yu_{c_0}^{-1} \  {}_ye_x  \ {}_xu_{c_0} ] \in \pi_1(CW(\C),c_0).$$
Since ${}_y\C_x$ is one dimensional, $d( {}_ye_x ) = {}_y\lambda_x \ {}_ye_x$ with ${}_y\lambda_x \in k$.
In order to define an abelian character $\chi$ such that $h(\chi)=d$, let $l$ be a loop at $c_0$ in $CW(\C)$.  By cellular approximation we can assume that the image of $l$ is a closed walk $w$ in $\C$.
In case $w$ is of the form $ {}_yu_{c_0}^{-1} \  {}_ye_x \ {}_xu_{c_0} $ we define $\chi [l] = {}_y \lambda_x$.  Otherwise the cellular loop $w$ is homotopic to a product of loops of the previous type or of their inverses, and we define $\chi [l] $ to be the corresponding sum of scalars.
We have to verify that $\chi$ is well defined.  First observe that if ${}_ze_y \ {}_ye_x \not = 0$, the scalars of the derivation $d$ verify
$$  {}_z\lambda_x =  {}_z\lambda_y +  {}_y\lambda_x.$$
Indeed, ${}_ze_y \ {}_ye_x  = \mu \ {}_ze_x$, with $\mu \not = 0$, hence

\begin{eqnarray*}
d ( {}_ze_y \ {}_ye_x )  & = &   {}_ze_y \  d( {}_ye_x) + d({}_ze_y) \  {}_ye_x \\
              & = & \mu \ (   {}_z\lambda_y +  {}_y\lambda_x  )  \ {}_ze_x .
\end{eqnarray*}
We deduce the result since $d( \mu \ {}_ze_x) = \mu \ {}_z\lambda_x \ {}_ze_x$.
Consider now two cellular loops $l$ and $l'$ which are homotopic by a $2$-cell, meaning that a walk ${}_ze_y , {}_ye_x$ is replaced by ${}_ze_x$.  The previous computation shows that $\chi [l ] = \chi [l']$.  We have already verified that any homotopy of cellular loops decomposes as a finite number of homotopies of the previous type, hence we deduce that $\chi$ is a well defined map.  By construction $\chi : \pi_1(CW(\C), c_0) \to k^+$ is an abelian character and clearly $h(\chi)=d$. \qed
\end{proof}


\footnotesize
\noindent C.C.:
\\Institut de math\'{e}matiques et de mod\'{e}lisation de Montpellier I3M,\\
UMR 5149\\
Universit\'{e}  Montpellier 2,
\\F-34095 Montpellier cedex 5,
France.\\
{\tt Claude.Cibils@math.univ-montp2.fr}

\noindent M.J.R.:
\\Departamento de Matem\'atica,
Universidad Nacional del Sur,\\Av. Alem 1253\\8000 Bah\'\i a Blanca,
Argentina.\\ {\tt mredondo@criba.edu.ar}

\noindent A.S.:
\\Departamento de Matem\'atica,
 Facultad de Ciencias Exactas y Naturales,\\
 Universidad de Buenos Aires,
\\Ciudad Universitaria, Pabell\'on 1\\
1428, Buenos Aires, Argentina. \\{\tt asolotar@dm.uba.ar}

\end{document}